\documentclass[11pt]{article}
\usepackage{geometry,amsmath,amsthm,amssymb,latexsym}

\usepackage[svgnames]{xcolor}
\usepackage[colorlinks=true,linkcolor=MidnightBlue,citecolor=MidnightBlue,urlcolor=MidnightBlue,pdfpagelabels=false]{hyperref}

\usepackage{thmtools}
\theoremstyle{plain}
  \declaretheorem[numberwithin=section]{theorem}
  \declaretheorem[numberlike=theorem]{corollary}
  \declaretheorem[numberlike=theorem]{proposition}
  \declaretheorem[numberlike=theorem]{lemma}
  \declaretheorem[numberlike=theorem]{conjecture}
  
\theoremstyle{definition}
  
  \declaretheorem[numberlike=theorem]{example}

\newcommand{\comma}{{,}}
\newcommand{\md}{\mathrm{d}}

\begin{document}

\title{Weakening the Legendre Conjecture}

\author{
Marc Chamberland\thanks{\textit{Email:} \texttt{chamberl@math.grinnell.edu}}\\
Grinnell College
\and
Armin Straub\thanks{\textit{Email:} \texttt{straub@southalabama.edu}}\\
University of South Alabama
}

\date{February 25, 2026}

\maketitle

\begin{abstract}
  The world of primes has many gaps between evidence and theorems. Here, we
  review Legendre's conjecture on primes between consecutive squares and
  recent progress on the weaker question of primes between consecutive larger
  powers. Assuming the Riemann hypothesis (RH), we observe that a recent
  result of Emanuel Carneiro, Micah Milinovich and Kannan Soundararajan,
  combined with a large-scale computation by Jonathan Sorenson and Jonathan
  Webster, implies the existence of primes between $x^{2 + \delta}$ and $(x +
  1)^{2 + \delta}$ for all real $x \geq 1$ when $\delta \geq 1 / 4$.
  For smaller values of $\delta > 0$, we provide an explicit bound $x_0 = x_0
  (\delta)$ such that primes exist in these intervals whenever $x \geq
  x_0$ (again assuming RH). We conclude with an application to Mills-type
  prime-generating constants.
\end{abstract}

\section{Introduction.}

Adrien-Marie Legendre famously conjectured that there is always a prime
between consecutive squares. In 1912, this conjecture was featured by Edmund
Landau at the International Congress of Mathematicians as one of four
``unattackable'' problems in number theory (the other three problems being the
Goldbach conjecture, the twin prime conjecture, and the conjecture that there
are infinitely many primes of the form $n^2 + 1$).

\begin{conjecture}[Legendre]
  There is a prime between $n^2$ and $(n + 1)^2$ for all positive integers
  $n$.
\end{conjecture}

This conjecture is usually stated for positive integers $n$ but it appears to
also hold when $n$ is replaced by a positive, real number. In other words,
conjecturally, every interval $[x^2, (x + 1)^2]$, where $x \geq 1$ is a
real number, contains at least one prime. Throughout, we will focus on this
slight strengthening of Legendre's conjecture.

While Legendre's conjecture has withstood proof for more than a century since
Landau's endorsement, it is natural to wonder what can be said about weaker
versions. For instance, it is known that there is a prime between $n^3$ and
$(n + 1)^3$ for all sufficiently large $n$. In fact, this is known to be true
for $n \geq \exp (\exp (32.537))$ due to recent work of Michaela
Cully-Hugill \cite[Theorem~4.3]{cully-hugill-phd}, improving a similar
earlier bound of Adrian Dudek \cite{dudek-primes-cubes}. Stronger results
are available if we assume that the Riemann hypothesis (RH) is true (this is
because, as is recalled at the beginning of Section~\ref{sec:legendre:weak},
RH is equivalent to certain estimates for the number of primes less than some
$x$). A goal of this article is to illustrate this explicitly in
Theorem~\ref{thm:legendre:a:rh:intro} below, which follows from combining a
recent result of Emanuel Carneiro, Micah Milinovich and Kannan Soundararajan
\cite{cms-primes-fourier} with a large-scale computation by Jonathan
Sorenson and Jonathan Webster \cite{sw-legendre}.

\begin{theorem}
  \label{thm:legendre:a:rh:intro}Suppose the Riemann hypothesis is true, and
  let $\delta > 0$.
  \begin{enumerate}
    \item \label{i:legendre:a:rh:intro:a}If $\delta \geq 1 / 4$, then the
    interval $[x^{2 + \delta}, (x + 1)^{2 + \delta}]$ contains a prime for all
    $x \geq 1$.
    
    \item If $\delta < 1 / 4$, then the interval $[x^{2 + \delta}, (x + 1)^{2
    + \delta}]$ contains a prime for all
    \begin{equation*}
      x \geq [\beta (\log (\beta) + \log (\log (\beta)))]^{\beta}, \quad
       \beta = 2 / \delta .
    \end{equation*}
  \end{enumerate}
\end{theorem}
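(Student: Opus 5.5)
The plan is to combine an explicit RH-conditional short-interval result with an elementary lower bound on the length of $[x^{2+\delta},(x+1)^{2+\delta}]$. From \cite{cms-primes-fourier} I will take the statement that, under RH, every interval $(y-\tfrac{22}{25}\sqrt{y}\log y,\,y]$ with $y\geq 4$ contains a prime. I quote the constant $22/25$ from memory, and the argument only needs some constant strictly below $1$. Put $y=(x+1)^{2+\delta}$. By the mean value theorem,
\begin{equation*}
  (x+1)^{2+\delta}-x^{2+\delta}\geq (2+\delta)x^{1+\delta}.
\end{equation*}
Also $\sqrt{y}\log y=(2+\delta)(x+1)^{1+\delta/2}\log(x+1)$. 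So it suffices to have
\begin{equation*}
  x^{1+\delta}\geq \tfrac{22}{25}(x+1)^{1+\delta/2}\log(x+1).
\end{equation*}
This is equivalent to $x^{\delta/2}\geq \tfrac{22}{25}(1+1/x)^{1+\delta/2}\log(x+1)$. With $\beta=2/\delta$, the condition reads $x^{1/\beta}\gtrsim \log x$, up to factors $1+O(1/x)$ that the constant $22/25$ absorbs for $x$ large.

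For part (b), set $u=\beta(\log\beta+\log\log\beta)$ and $x_0=u^\beta$. Then $x_0^{1/\beta}=u$ and $\log x_0=\beta\log u$. Since $\log u=\log\beta+\log(\log\beta+\log\log\beta)$, the needed inequality $u\geq \tfrac{22}{25}(1+o(1))\beta\log u$ becomes
\begin{equation*}
  \log\beta+\log\log\beta\geq \tfrac{22}{25}(1+\epsilon)\bigl(\log\beta+\log(\log\beta+\log\log\beta)\bigr).
\end{equation*}
This should follow because $\log(\log\beta+\log\log\beta)\leq \log\log\beta+\log 2$, and the missing factor $3/25$ of $\log\beta$ dominates $\log 2$ plus the error terms once $\beta>8$. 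I will check this by a short calculus argument, with a numerical check near $\beta=8$.

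To pass from $x_0$ to all $x\geq x_0$, I use that $g(x)=x^{1/\beta}/\log x$ is increasing for $x\geq e^\beta$. I also need $x_0\geq e^\beta$, which holds since $u\geq e$. The factor $(1+1/x)^{1+\delta/2}\log(x+1)/\log x$ is decreasing in $x$, so the inequality persists for all $x\geq x_0$.

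For part (a), the bound above with $\beta\leq 8$ covers all $x\geq x_0(\delta)$. At $\delta=1/4$ this threshold is about $(8(\log 8+\log\log 8))^8\approx 22.5^8\approx 7\cdot 10^{10}$, and it is smaller for larger $\delta$. For the finitely many remaining $x$, I appeal to the computation \cite{sw-legendre}, which verifies that $[m^2,(m+1)^2]$ contains a prime for all $m$ up to a very large bound (of order $10^{13}$ or beyond). The interval $[x^{2+\delta},(x+1)^{2+\delta}]$ has length at least $(2+\delta)x^{1+\delta}$. Gaps between consecutive squares near $(x+1)^{2+\delta}$ are about $2(x+1)^{1+\delta/2}$, and this is at most half the interval length once $x$ exceeds a small absolute bound. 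Hence the interval contains a full $[m^2,(m+1)^2]$ with $m\leq (x+1)^{1+\delta/2}$. This requires $\delta\geq 1/4$ in the exponent comparison, together with a direct check for very small $x$.

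We must have $m$ within the verified range. For $\delta=1/4$ we get $m\lesssim (7\cdot10^{10})^{1.125}\approx 10^{12}$, which is fine. For large $\delta$ the threshold $x_0(\delta)$ is tiny, so $m$ is tiny.

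The main obstacle is uniformity in $\delta\geq 1/4$ for this bookkeeping. I need a single explicit check that $(x_0(\delta)+1)^{1+\delta/2}$ stays below the Sorenson--Webster bound for every $\delta\geq 1/4$, together with a direct verification for small $x$ (say $x<10$) where the containment of a square interval fails. I would first try to show that $\delta\mapsto (x_0(\delta)+1)^{1+\delta/2}$ is decreasing on $[1/4,\infty)$. Failing that, I would crudely bound it by its value at $\delta=1/4$ times a constant.
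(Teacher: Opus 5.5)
Part (b) of your proposal is sound and is essentially the paper's argument: the Carneiro--Milinovich--Soundararajan bound plus the explicit threshold lemma. Anchoring the CMS interval at the right endpoint $(x+1)^{2+\delta}$ only costs the harmless factor $(1+1/x)^{1+\delta/2}\log(x+1)/\log x$.

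One caution on (b). The crude bound $\log(\log\beta+\log\log\beta)\le\log\log\beta+\log 2$ is too lossy on the whole range $8<\beta\lesssim 43$, not just near $\beta=8$. Keep the exact term instead: $22\log(1+\log\log\beta/\log\beta)\le 22/e<3(\log 8+\log\log 8)$, and the right-hand side only grows for $\beta\ge 8$.

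\textbf{The gap is in part (a).} It sits exactly at the uniformity issue you flag, and your proposed remedy cannot close it.
\begin{itemize}
\item The threshold $x_0(\delta)=[\beta(\log\beta+\log\log\beta)]^\beta$ is only justified for $\beta\ge e$, i.e.\ $\delta\le 2/e$.
\item For $\beta<e$ it stops being a valid threshold. For $\delta=1$ it gives $x_0\approx 0.43$, yet your sufficient inequality $x^{\delta/2}\ge\frac{22}{25}(1+1/x)^{1+\delta/2}\log(x+1)$ fails at $x=1$.
\item For $\delta\ge 2$ the threshold is not even defined.
\item Your ``direct verification for small $x$'' is not a finite computation once $\delta$ ranges over $[1/4,\infty)$. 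For fixed $x$ the numbers $x^{2+\delta}$ are unbounded, and the square interval you would embed has $m$ up to $(x+1)^{1+\delta/2}$, far outside the Sorenson--Webster range.
\end{itemize}
So proving that $\delta\mapsto(x_0(\delta)+1)^{1+\delta/2}$ is decreasing would not finish the proof.

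\textbf{The missing idea is monotonicity in the exponent.} It reduces (a) to the single case $\delta=1/4$, and this is what the paper does. Let $\alpha'\ge\alpha$, $x\ge 1$, and put $y=x^{\alpha'/\alpha}\ge 1$. Since $x^p+1\le(x+1)^p$ for $p=\alpha'/\alpha\ge 1$, you get $[y^\alpha,(y+1)^\alpha]\subseteq[x^{\alpha'},(x+1)^{\alpha'}]$. Hence the case $\alpha=9/4$ for all $y\ge 1$ implies every $\alpha'\ge 9/4$. You already use this containment with exponent $2$ when you embed square intervals.

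For $\delta=1/4$ alone your scheme then works:
\begin{itemize}
\item RH covers $x\ge 6.55\cdot 10^{10}$.
\item Below that, $(x+1)^{9/8}-x^{9/8}\ge\frac98 x^{1/8}\ge 2$ once $x\ge 100$ (not $x\ge 10$). The interval then contains a full $[m^2,(m+1)^2]$ with $m\le 1.5\cdot 10^{12}<7.05\cdot 10^{13}$.
\item The range $1\le x<100$ is a genuinely finite check.
\end{itemize}
The paper instead applies the Oppermann halves verified by Sorenson--Webster directly to the non-integer $y=x^{9/8}$. That removes the need for your length condition.
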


With more work, this result can be sharpened. For instance, as indicated at
the end of Section~\ref{sec:legendre:rh}, the bound for $\delta$ in part
\ref{i:legendre:a:rh:intro:a} can be improved to $0.2253$. On the other hand,
obtaining a result like $\delta \geq 1 / 5$ in part
\ref{i:legendre:a:rh:intro:a} seems out of reach with the current state of
computation and theory.

Such Legendre-type results have an appealing application to prime-generating
constants originating with William Mills \cite{mills-primegen}, who famously
proved that there exists a real number $A$ such that $\lfloor A^{3^n} \rfloor$
is a prime for every positive integer $n$. Mills' proof is based on a result
of Albert Ingham \cite{ingham-primes-diff} from which he deduces that there
is always a prime between $n^3$ and $(n + 1)^3$ for all sufficiently large
$n$. By the special case $\delta = 1$ of
Theorem~\ref{thm:legendre:a:rh:intro}, which was previously proved by Chris
Caldwell and Yuanyou Cheng \cite{cc-mills}, this is in fact true for all
positive integers $n$ if we are willing to assume the Riemann hypothesis.
Caldwell and Cheng use that fact to show that, assuming RH, the number
\begin{equation}
  A = 1.306377883863080690468614492602 \ldots \label{eq:mills:ct}
\end{equation}
is the smallest number with the property that $\lfloor A^{3^n} \rfloor$ is a
prime for all $n$. Without assuming RH, Christian Elsholtz
\cite{elsholtz-mills} has produced unconditional variants that generate only
primes. Namely, to high precision, Elsholtz unconditionally gives constants
$B$ and $C$ such that $\left\lfloor B^{10^{10^n}} \right\rfloor$ and
$\left\lfloor C^{3^{13^n}} \right\rfloor$ are prime for all $n$. In
Section~\ref{sec:mills}, assuming RH, we show that (a minor strengthening of)
Theorem~\ref{thm:legendre:a:rh:intro} allows us to determine, for any fixed
$\alpha \geq 2.25$, the smallest constant $A$ such that $\lfloor
A^{\alpha^n} \rfloor$ is a prime for all $n$.

In Section~\ref{sec:legendre:weak}, we begin by demonstrating how the Riemann
hypothesis readily implies that there is a prime between $x^{2 + \delta}$ and
$(x + 1)^{2 + \delta}$ for all sufficiently large $x$. The explicit bounds
provided by Theorem~\ref{thm:legendre:a:rh:intro} on $x$ are then discussed in
Section~\ref{sec:legendre:rh}. In Section~\ref{sec:legendre:smaller}, we
briefly consider the question of reducing the exponents of $x$ and $x + 1$ to
values less than $2$, in which case one can again ask whether there are primes
between such smaller powers. In that case, it is truly necessary to restrict
to values of $x$ that are sufficiently large. The resulting conjecture,
however, remains open even assuming the Riemann hypothesis. We conclude in
Section~\ref{sec:mills} with an application to prime-generating constants
originating with Mills \cite{mills-primegen}.

In the remainder of this introduction, we compare
Theorem~\ref{thm:legendre:a:rh:intro} with various existing results that do
not rely on the Riemann hypothesis. For instance, it is known that
Theorem~\ref{thm:legendre:a:rh:intro} holds for large enough values of $\alpha
= 2 + \delta$ unconditionally. Specifically, Dudek \cite{dudek-primes-cubes}
has proved, without relying on RH, that if $\alpha \geq 5 \cdot 10^9$,
then there is a prime between $n^{\alpha}$ and $(n + 1)^{\alpha}$ for all $n
\geq 1$. Caitlin Mattner \cite{mattner-bsc} subsequently improved this
result to hold for all $\alpha \geq 1 \comma 438 \comma 989$. Very
recently, Michaela Cully-Hugill and Daniel Johnston
\cite{chj-primes-powers2} have unconditionally proved that, for $\alpha =
90$, there is always a prime between $n^{\alpha}$ and $(n + 1)^{\alpha}$ for
all $n \geq 1$. This improves earlier results
\cite{cully-hugill-primes-powers} and \cite{chj-primes-powers} that showed
the same conclusion for $\alpha = 155$ and $\alpha = 140$.

Let $p_n$ denote the $n$th prime. Numerous results can be found in the
literature that prove, for fixed $\theta$,
\begin{equation}
  p_{n + 1} - p_n < p_n^{\theta} \label{eq:primegaps:theta}
\end{equation}
for all sufficiently large $n$. The case $\theta = 1$ was postulated by Joseph
Bertrand in 1845 and proved by Pafnuty Chebyshev in 1852 (in this case
\eqref{eq:primegaps:theta} holds for all $n \geq 1$). If $\theta < 1$ the
inequalities \eqref{eq:primegaps:theta} are equivalent to the intervals $[x -
x^{\theta}, x]$ containing at least one prime for all sufficiently large $x$.
The first such result was due to Guido Hoheisel \cite{hoheisel-primes-diff}
who showed that one can take $\theta = 32999 / 33000$. For a history of
results of this kind, we refer to \cite{bh-primes-diff}, where this is
improved to $\theta = 0.535$. At present, the strongest such result is due to
Roger Baker, Glyn Harman and J{\'a}nos Pintz \cite{bhp-primes-diff2} who
prove that $\theta = 0.525$ is possible. Assuming the Riemann hypothesis, it
is known that we can choose any $\theta > 1 / 2$, while a conjecture of Harald
Cram\'er \cite{cramer-primes-diff} implies that, in fact, any $\theta > 0$
is possible. We refer to Section~\ref{sec:legendre:smaller} for this
conjecture on the true nature of the prime gaps $p_{n + 1} - p_n$. Here, note
that results of the type \eqref{eq:primegaps:theta} yield weak versions of
Legendre's conjecture as follows.

\begin{proposition}
  \label{prop:theta:alpha}Fix $0 < \theta < 1$ and let $\alpha = \frac{1}{1 -
  \theta}$. If the inequality \eqref{eq:primegaps:theta} holds for all
  sufficiently large $n$ then the intervals $[x^{\alpha}, (x + 1)^{\alpha}]$
  contain a prime for all sufficiently large $x$.
\end{proposition}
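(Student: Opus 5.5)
Proof plan. The idea is to compare the length of the interval $[x^{\alpha}, (x+1)^{\alpha}]$ with the maximal prime gap near its left endpoint. By the mean value theorem, $(x+1)^{\alpha} - x^{\alpha} \geq \alpha x^{\alpha-1}$, since $\alpha > 1$ and so $t \mapsto t^{\alpha}$ has increasing derivative. Because $\alpha = 1/(1-\theta)$, we have $\alpha - 1 = \theta\alpha$. Writing $y = x^{\alpha}$, this gives
\begin{equation*}
  (x+1)^{\alpha} - x^{\alpha} \geq \alpha\, y^{\theta} > y^{\theta}.
\end{equation*}
So it suffices to show that every interval $(y, y + y^{\theta}]$ contains a prime for all sufficiently large $y$.

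Assume \eqref{eq:primegaps:theta} holds for all $n \geq N$. Take $x$ large enough that $y = x^{\alpha} \geq p_N$. Let $p_n$ be the largest prime with $p_n \leq y$; then $n \geq N$ and $p_{n+1} > y$. We get
\begin{equation*}
  p_{n+1} < p_n + p_n^{\theta} \leq y + y^{\theta},
\end{equation*}
where the second inequality uses that $t \mapsto t + t^{\theta}$ is increasing and $p_n \leq y$. Combining with the first display, $y < p_{n+1} < (x+1)^{\alpha}$, so $p_{n+1}$ lies in $[x^{\alpha}, (x+1)^{\alpha}]$, as required. This works for every real $x \geq p_N^{1/\alpha}$.

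There is no real obstacle. The only point needing care is to choose $p_n$ as the largest prime not exceeding $y$, rather than the smallest prime above $y$, so that the gap bound is anchored at a prime $\leq y$ and the monotonicity of $t + t^{\theta}$ applies. The convexity estimate $(x+1)^{\alpha} - x^{\alpha} \geq \alpha x^{\alpha-1}$ holds for all $x>0$, so no further largeness condition on $x$ is needed.
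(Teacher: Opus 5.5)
Your proof is correct and is essentially the paper's argument: both compare $(x+1)^{\alpha}-x^{\alpha} \geq \alpha x^{\alpha-1} = \alpha x^{\alpha\theta}$ (using $\alpha(1-\theta)=1$) with the gap bound $p_n^{\theta}$ at the last prime $p_n$ not exceeding $x^{\alpha}$. The paper argues by contradiction from $p_n < x^{\alpha} < (x+1)^{\alpha} < p_{n+1}$, while you argue directly and make the threshold $x \geq p_N^{1/\alpha}$ explicit, which is only a cosmetic difference.
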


\begin{proof}
  Suppose that $[x^{\alpha}, (x + 1)^{\alpha}]$ does not contain a prime, and
  choose $n$ so that $p_n < x^{\alpha} < (x + 1)^{\alpha} < p_{n + 1}$. In
  that case,
  \begin{equation*}
    \frac{p_{n + 1} - p_n}{p_n^{\theta}} > \frac{(x + 1)^{\alpha} -
     x^{\alpha}}{x^{\alpha \theta}} > \frac{\alpha x^{\alpha - 1}}{x^{\alpha
     \theta}} = \alpha x^{\alpha (1 - \theta) - 1} = \alpha > 1,
  \end{equation*}
  which contradicts \eqref{eq:primegaps:theta}.
\end{proof}

For $\theta = 0.525$, we have $\frac{1}{1 - \theta} \approx 2.1053$. Hence,
the result by Baker, Harman and Pintz \cite{bhp-primes-diff2} implies the
following.

\begin{corollary}[Baker, Harman, Pintz \cite{bhp-primes-diff2}]
  \label{cor:bhp}If $\alpha \geq 2.106$, then there exists $x_0$ such
  that the intervals $[x^{\alpha}, (x + 1)^{\alpha}]$ contain a prime for all
  $x \geq x_0$.
\end{corollary}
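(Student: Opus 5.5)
The plan is to combine Proposition~\ref{prop:theta:alpha} with the Baker--Harman--Pintz theorem \cite{bhp-primes-diff2}. That theorem says that for all sufficiently large $x$, the interval $[x - x^{0.525}, x]$ contains a prime. Equivalently, $p_{n+1} - p_n < p_n^{0.525}$ for all sufficiently large $n$. This is exactly inequality \eqref{eq:primegaps:theta} with $\theta = 0.525$.

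There is a small mismatch to handle: Proposition~\ref{prop:theta:alpha} produces only the single exponent $\alpha = 1/(1-\theta)$, whereas the corollary claims every $\alpha \geq 2.106$. I will resolve this by varying $\theta$ rather than $\alpha$. Given $\alpha \geq 2.106$, set $\theta = 1 - 1/\alpha$, so that $\alpha = 1/(1-\theta)$. Since $1/(1-0.525) = 1/0.475 \approx 2.10526 < 2.106 \leq \alpha$, we get $\theta \geq 1 - 1/2.106 > 0.525$. Also $0 < \theta < 1$, as the proposition requires.

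The key step is a monotonicity observation. For $p_n \geq 1$ and $\theta \geq 0.525$ we have $p_n^{0.525} \leq p_n^{\theta}$. Hence \eqref{eq:primegaps:theta} with $\theta = 0.525$, valid for all large $n$, implies \eqref{eq:primegaps:theta} for this larger $\theta$, again for all large $n$. Proposition~\ref{prop:theta:alpha} then gives that $[x^{\alpha}, (x+1)^{\alpha}]$ contains a prime for all $x \geq x_0$, for some $x_0$ depending on $\alpha$.

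There is no real obstacle here. The only points needing care are the numerical check $1/0.475 < 2.106$ and reading the Baker--Harman--Pintz result in the gap form \eqref{eq:primegaps:theta}; the paper has already noted that, for $\theta < 1$, these two formulations are equivalent.
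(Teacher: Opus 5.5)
Your proof is correct and follows the paper's route: apply Proposition~\ref{prop:theta:alpha} to the Baker--Harman--Pintz gap bound with $\theta = 0.525$, using $1/0.475 \approx 2.1053 < 2.106$. Your step of passing to $\theta = 1 - 1/\alpha > 0.525$, via $p_n^{0.525} \leq p_n^{\theta}$, makes explicit the extension to every $\alpha \geq 2.106$, which the paper leaves implicit.
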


Baker, Harman and Pintz \cite{bhp-primes-diff2} remark that ``with enough
effort'' one could work out an explicit value for the lower bound $x_0$.
However, due to the intricate techniques involved, both the required effort as
well as the resulting bound would likely be enormous. On the other hand, as
seen in Theorem~\ref{thm:legendre:a:rh:intro}, if we assume RH then it is not
too hard to obtain explicit lower bounds for all $\alpha > 2$.

\section{Weak versions of the Legendre conjecture.}\label{sec:legendre:weak}

Let $\pi (x)$ denote the prime counting function and let $\operatorname{Li} (x)$ be
the logarithmic integral
\begin{equation*}
  \operatorname{Li} (x) = \int_2^x \frac{\md t}{\log (t)} .
\end{equation*}
It is well-known that the Riemann hypothesis is equivalent to having the
bounds, as $x \rightarrow \infty$,
\begin{equation}
  \pi (x) = \operatorname{Li} (x) + O (x^{1 / 2 + \varepsilon}) \label{eq:rh:li}
\end{equation}
for all fixed $\varepsilon > 0$. The exponent $1 / 2$ in this bound directly
reflects the fact that the RH predicts all non-trivial zeros of the Riemann
zeta function to have real part $1 / 2$. Unconditionally, it is not even known
whether the exponent $1 / 2$ can be replaced with $\sigma$ for any $\sigma <
1$ (corresponding to the fact that there is no value $\sigma < 1$ for which we
can currently prove that all non-trivial zeros of the Riemann zeta function
have real part less than $\sigma$). We remark that Helge von Koch
\cite{koch-riemann} further proved that the Riemann hypothesis is equivalent
to
\begin{equation}
  \pi (x) = \operatorname{Li} (x) + O \left(\sqrt{x} \log (x) \right) .
  \label{eq:rh:vonkoch}
\end{equation}
(Note that inequality \eqref{eq:rh:vonkoch} is clearly stronger than the bound
\eqref{eq:rh:li} for any $\varepsilon > 0$.)

We now observe that any bound of the form \eqref{eq:rh:li} implies that there
are primes between corresponding consecutive powers.

\begin{lemma}
  \label{lem:eps:a}Suppose the bound \eqref{eq:rh:li} holds for a certain
  $\varepsilon \in \left(0, \frac{1}{2} \right)$ and let $\alpha > \frac{2}{1
  - 2 \varepsilon}$. Then the interval $[x^{\alpha}, (x + 1)^{\alpha}]$
  contains a prime for all sufficiently large $x$.
\end{lemma}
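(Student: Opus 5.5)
The plan is to compare $\pi((x+1)^{\alpha})-\pi(x^{\alpha})$ with the corresponding difference of $\operatorname{Li}$, and to show that the main term eventually beats the two error terms. Write $y=x^{\alpha}$ and $Y=(x+1)^{\alpha}$. By \eqref{eq:rh:li}, there is a constant $C$ such that, for all sufficiently large $x$,
\begin{equation*}
  \pi(Y)-\pi(y) \geq \operatorname{Li}(Y)-\operatorname{Li}(y) - C\bigl(Y^{1/2+\varepsilon}+y^{1/2+\varepsilon}\bigr) \geq \operatorname{Li}(Y)-\operatorname{Li}(y) - 2C\,Y^{1/2+\varepsilon}.
\end{equation*}
It therefore suffices to show that the right-hand side is positive for large $x$.

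\textbf{Lower bound for the main term.} Since $1/\log(t)$ is decreasing,
\begin{equation*}
  \operatorname{Li}(Y)-\operatorname{Li}(y)=\int_y^Y\frac{\md t}{\log t}\geq \frac{Y-y}{\log Y}.
\end{equation*}
As in the proof of Proposition~\ref{prop:theta:alpha}, the mean value theorem gives $Y-y\geq \alpha x^{\alpha-1}$ for $\alpha>1$. Also $\log Y=\alpha\log(x+1)\leq 2\alpha\log x$ for large $x$. Together these give
\begin{equation*}
  \operatorname{Li}(Y)-\operatorname{Li}(y)\geq \frac{x^{\alpha-1}}{2\log x}.
\end{equation*}

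\textbf{Upper bound for the error term.} For $x\geq1$ we have $Y\leq (2x)^{\alpha}$, so
\begin{equation*}
  2C\,Y^{1/2+\varepsilon}\leq C' x^{\alpha(1/2+\varepsilon)}.
\end{equation*}

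\textbf{Comparing exponents.} We need $x^{\alpha-1}/\log x$ to dominate $x^{\alpha(1/2+\varepsilon)}$. This holds as soon as $\alpha-1>\alpha(1/2+\varepsilon)$, that is, $\alpha(1/2-\varepsilon)>1$, which is equivalent to $\alpha>\frac{2}{1-2\varepsilon}$. That is exactly the hypothesis. In this case the ratio of the two sides grows like $x^{\eta}/\log x$ for some $\eta>0$, so it tends to infinity. Hence $\pi(Y)-\pi(y)>0$ for all sufficiently large $x$, and the interval $[x^{\alpha},(x+1)^{\alpha}]$ contains a prime.

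There is no real obstacle here. The only care needed is bookkeeping: the constant in \eqref{eq:rh:li} applies only for large arguments, which is fine since $y\to\infty$, and the logarithmic factor must be absorbed by the strict inequality on $\alpha$.
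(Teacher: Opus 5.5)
Your proposal is correct and follows essentially the same route as the paper's proof. Both bound $\operatorname{Li}((x+1)^{\alpha})-\operatorname{Li}(x^{\alpha})$ below by roughly $x^{\alpha-1}/\log x$ using the mean value estimate $(x+1)^{\alpha}-x^{\alpha}\geq \alpha x^{\alpha-1}$, absorb the two error terms into $O(x^{\alpha(1/2+\varepsilon)})$, and observe that $\alpha-1>\alpha(1/2+\varepsilon)$ is exactly the hypothesis $\alpha>\frac{2}{1-2\varepsilon}$.
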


\begin{proof}
  Suppose that $\alpha > 2 / (1 - 2 \varepsilon)$. Since $O ((x + 1)^{\beta})
  = O (x^{\beta})$ as $x \rightarrow \infty$ for any $\beta > 0$, the bound
  \eqref{eq:rh:li} implies that
  \begin{eqnarray*}
    \pi ((x + 1)^{\alpha}) - \pi (x^{\alpha}) & = & \operatorname{Li} ((x +
    1)^{\alpha}) - \operatorname{Li} (x^{\alpha}) + O (x^{\alpha (1 / 2 +
    \varepsilon)})\\
    & = & \int_{x^{\alpha}}^{(x + 1)^{\alpha}} \frac{\md t}{\log (t)} + O
    (x^{\alpha (1 / 2 + \varepsilon)}) .
  \end{eqnarray*}
  On the other hand,
  \begin{equation*}
    \int_{x^{\alpha}}^{(x + 1)^{\alpha}} \frac{\md t}{\log (t)} > \frac{(x
     + 1)^{\alpha} - x^{\alpha}}{\alpha \log (x + 1)} > \frac{x^{\alpha -
     1}}{\log (x + 1)},
  \end{equation*}
  where we used $(x + 1)^{\alpha} > x^{\alpha} + \alpha x^{\alpha - 1}$ for
  the final inequality.
  
  We now note that the inequality $\alpha - 1 > \alpha (1 / 2 + \varepsilon)$
  is equivalent to our assumption that $\alpha > 2 / (1 - 2 \varepsilon)$.
  Therefore, put together, we have
  \begin{equation*}
    \pi ((x + 1)^{\alpha}) - \pi (x^{\alpha}) > \frac{x^{\alpha - 1}}{\log
     (x)} (1 + o (1)) .
  \end{equation*}
  In particular, we have $\pi ((x + 1)^{\alpha}) - \pi (x^{\alpha}) > 1$ for
  sufficiently large $x$.
\end{proof}

If the Riemann hypothesis is true, then we can choose any $\varepsilon > 0$ in
Lemma~\ref{lem:eps:a}. It follows that the conclusion is true for all $\alpha
> 2$ and we obtain the following.

\begin{corollary}
  \label{cor:legendre:large:rh}Suppose the Riemann hypothesis is true, and let
  $\alpha > 2$ be fixed. Then the interval $[x^{\alpha}, (x + 1)^{\alpha}]$
  contains a prime for all sufficiently large $x$.
\end{corollary}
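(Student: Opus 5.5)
The plan is to deduce the corollary directly from Lemma~\ref{lem:eps:a}, choosing $\varepsilon$ according to the given $\alpha$. Fix $\alpha > 2$. The condition $\alpha > \frac{2}{1 - 2\varepsilon}$ is equivalent to $1 - 2\varepsilon > 2/\alpha$, that is, to $\varepsilon < \frac{1}{2} - \frac{1}{\alpha}$. Since $\alpha > 2$, the quantity $\frac{1}{2} - \frac{1}{\alpha}$ is positive, and it lies strictly between $0$ and $\frac{1}{2}$. I will therefore take, for concreteness, $\varepsilon = \frac{1}{2}\bigl(\frac{1}{2} - \frac{1}{\alpha}\bigr) = \frac{\alpha - 2}{4\alpha}$. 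This value lies in $\left(0, \frac{1}{2}\right)$ and satisfies $\alpha > 2/(1 - 2\varepsilon)$.

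Next I need the hypothesis of Lemma~\ref{lem:eps:a}, namely that the bound \eqref{eq:rh:li} holds for this particular $\varepsilon$. This comes from the Riemann hypothesis via the equivalence recalled at the start of Section~\ref{sec:legendre:weak}: RH gives \eqref{eq:rh:li} for every fixed $\varepsilon > 0$. Alternatively, it follows from von Koch's bound \eqref{eq:rh:vonkoch}, since $\sqrt{x}\log(x) = O(x^{1/2+\varepsilon})$ for any $\varepsilon > 0$. With both hypotheses verified, Lemma~\ref{lem:eps:a} gives that $[x^{\alpha}, (x+1)^{\alpha}]$ contains a prime for all sufficiently large $x$, which is the claim.

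There is no real obstacle here. The only point needing care is that Lemma~\ref{lem:eps:a} requires $\varepsilon$ strictly below the threshold $\frac{1}{2} - \frac{1}{\alpha}$ and strictly inside $\left(0,\frac12\right)$. Both hold because $\alpha > 2$ is strict, and this is exactly why the argument says nothing at $\alpha = 2$ (Legendre's conjecture itself). The resulting threshold for ``sufficiently large'' depends on $\alpha$ through $\varepsilon$ and is not explicit. Making it explicit is the subject of Theorem~\ref{thm:legendre:a:rh:intro}.
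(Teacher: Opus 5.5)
Your proposal is correct and takes the same route as the paper: the paper also deduces the corollary by observing that, under RH, any $\varepsilon > 0$ may be used in Lemma~\ref{lem:eps:a}, which covers every $\alpha > 2$. Your explicit choice $\varepsilon = \frac{\alpha-2}{4\alpha}$ just makes that one-line step concrete.
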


Let us note that, using a strong result of Ingham \cite{ingham-primes-diff},
the Riemann hypothesis in Corollary~\ref{cor:legendre:large:rh} can be
replaced with the Lindel{\"o}f hypothesis which is known to be weaker than RH.
Namely, Ingham proves that if we have
\begin{equation*}
  \zeta \left(\frac{1}{2} + i t \right) = O (t^c), \quad t \rightarrow
   \infty,
\end{equation*}
for some constant $c > 0$, then the inequalities $p_{n + 1} - p_n <
p_n^{\theta}$ in \eqref{eq:primegaps:theta} hold for large enough $n$ if
$\theta > \frac{1 + 4 c}{2 + 4 c}$. The Lindel{\"o}f hypothesis posits that we
can choose $c$ to be arbitrarily close to $0$. If true, it follows that any
$\theta > \frac{1}{2}$ can be chosen in \eqref{eq:primegaps:theta}. Thus,
Proposition~\ref{prop:theta:alpha} implies
Corollary~\ref{cor:legendre:large:rh} with the Lindel{\"o}f hypothesis in
place of the Riemann hypothesis.

\section{Explicit bounds under RH.}\label{sec:legendre:rh}

The error term in \eqref{eq:rh:vonkoch} suggests the length of short intervals
that are guaranteed to contain a prime. At present, the strongest known result
is the following due to Carneiro, Milinovich and Soundararajan
\cite{cms-primes-fourier}.

\begin{theorem}[Carneiro, Milinovich, Soundararajan
\cite{cms-primes-fourier}]
  \label{thm:cms}Suppose the Riemann hypothesis is true. Then, for all $x
  \geq 4$, there is a prime in $\left[ x, x + \frac{22}{25} \sqrt{x} \log
  (x) \right]$.
\end{theorem}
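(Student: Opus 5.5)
The plan is to argue by contradiction using the Guinand--Weil explicit formula with a carefully chosen test function. Suppose $x \geq 4$ and the interval $[x, x + h]$ with $h = \frac{22}{25}\sqrt{x}\log(x)$ contains no prime. Write $\Delta = h/x$, so that on the logarithmic scale the prime-free interval is $[\log x, \log x + \log(1+\Delta)]$. I would choose an even, smooth, nonnegative test function $g$ supported on an interval of length about $\Delta$, centred and dilated onto this logarithmic window, and form the weighted prime sum $\sum_n \Lambda(n) g(\log n)/\sqrt{n}$.

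Under the assumption that there are no primes in $[x,x+h]$, only proper prime powers $p^k$ with $k\ge 2$ can contribute to this sum. There are $O(1)$ prime squares in a window of length $h\approx\sqrt{x}\log x$, and cubes and higher powers contribute even less, so this prime-power contribution is $O(\log x)$ and can be bounded explicitly.

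On the other side of the explicit formula we obtain three pieces:
\begin{itemize}
\item the main term coming from the pole at $s=1$, of size roughly $\sqrt{x}\,\Delta\,\widehat{g}(0)$, that is, about $h/\sqrt{x}\asymp \log x$ times a constant;
\item the archimedean term, which is $O(\Delta \log\log x)$ and negligible;
\item the sum over nontrivial zeros. Under RH this is $\sum_\gamma \widehat{g}(\Delta\gamma)$, with all terms real because every zero is $\frac12+i\gamma$.
\end{itemize}

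The key idea is to choose $g$ so that $\widehat{g}$ has a favourable sign structure. The zero sum is then controlled using the Riemann--von Mangoldt formula $N(T)=\frac{T}{2\pi}\log\frac{T}{2\pi e}+O(\log T)$. Replacing the sum over zeros by the integral $\frac{1}{2\pi}\int \widehat{g}(\Delta t)\log\frac{|t|}{2\pi}\,\md t$ produces a term of size about $\frac{1}{\Delta}\log\frac{1}{\Delta}\cdot g(0)$, that is, $\frac{\sqrt{x}}{h}\cdot\frac12\log x$ times a constant. Comparing this with the main term $\asymp h/\sqrt{x}$ gives a contradiction as soon as $h \geq c\sqrt{x}\log x$, where $c$ is the value of a Fourier extremal problem. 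That problem is to maximize $\widehat{g}(0)$, or the appropriate ratio, over admissible $g$: functions supported in the window, or nonpositive outside it, with $\widehat{g}$ suitably signed, normalized at $0$. This is a Beurling--Selberg-type problem, and its value is what yields the constant $22/25$. I would first take the classical choice $g = \mathbf{1}*\mathbf{1}$ (a triangle or Fej\'er-type function), which should already give a constant near $1$. I would then improve it by numerically optimizing over functions of the form $(\text{polynomial})\times(\text{Fej\'er kernel})$ to reach $0.88$.

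The main obstacle is making every error term explicit and uniform down to moderate $x$. This includes the $O(\log T)$ term in $N(T)$, the Stirling-type bounds for the archimedean term, and the prime-power contribution, since these are what decide where the asymptotic argument starts to win. Beyond some explicit threshold $x_1$ the analytic inequality gives the contradiction. For $4\le x< x_1$ I would appeal to existing computations of maximal prime gaps, which are verified up to about $4\cdot 10^{18}$. These show that gaps below that bound are far smaller than $\frac{22}{25}\sqrt{x}\log x$, so the statement holds there. I expect the numerical optimization of the extremal function, together with keeping $x_1$ below the range covered by the gap tables, to be the delicate part.
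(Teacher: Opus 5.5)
The paper does not prove this statement. It quotes it from Carneiro, Milinovich and Soundararajan and uses it as a black box, so your outline has to be judged as a reconstruction of their argument. Your overall architecture is the right kind: an explicit formula localized on the prime-free window, the zero sum controlled by the zero density, a Fourier extremal problem fixing the constant, and prime-gap tables for the initial range. But the step that actually produces $22/25$ is missing, and in the form you set it up it cannot work.

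Your zero-sum estimate $\frac{1}{\Delta}\log\frac{1}{\Delta}\cdot g(0)$ presupposes $\hat g\ge 0$. For signed $\hat g$ the bound involves $\|\hat g\|_{L^1}$, not $\int\hat g\propto g(0)$. With $\hat g\ge 0$, the constant you obtain is $\frac{|W|\,g(0)}{2\int g}$, where $W$ is the normalized window. Now take any $g$ with $\hat g\ge 0$ that is supported in $W$, or merely $\le 0$ off $W$. Poisson summation over the lattice $\frac{|W|}{2}\mathbb{Z}$, centred at the midpoint of $W$, gives $\frac{2}{|W|}\int g\le\sum_k g\bigl(k\frac{|W|}{2}\bigr)\le g(0)$. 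The first inequality drops the nonzero frequencies using $\hat g\ge0$; the second drops the nonzero lattice points using $g\le 0$ off $W$. This is Tur\'an's inequality, i.e.\ the one-dimensional Cohn--Elkies bound, with equality for the triangle $\mathbf{1}*\mathbf{1}$. So the triangle is already optimal in your class and yields exactly the constant $1$; no numerical optimization there can reach $0.88$. The ansatz ``polynomial times Fej\'er kernel'' is not even admissible. On the zero side it is not integrable unless the polynomial is constant, and on the prime side its Fourier transform contains point masses.

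To get below $1$ you must give up $\hat g\ge 0$ and bound the zero sum by $\sum_\gamma|\hat g(\Delta\gamma)|$, so the figure of merit becomes $\int g/\|\hat g\|_{L^1}$. With $g$ still supported in $W$, this is the H\"ormander--Bernhardsson point-evaluation problem for Paley--Wiener functions, which only improves $1$ to roughly $0.92$. The rest of the gain comes from also letting $g$ be negative outside $W$, where primes and prime powers then enter the explicit formula with a harmless sign. The substance of the cited theorem is to formulate this sign-constrained problem and exhibit an admissible $g$ whose ratio beats $25/22$. That ratio needs enough room to absorb the explicit error terms down to $x\approx 4\cdot 10^{18}$: the zero density with explicit bounds for $S(t)$, now applied to $|\hat g|$, plus the archimedean and prime-power terms. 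The proposal supplies none of this.

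Your scaling heuristic also needs repair. First, $1/\Delta=x/h$, not $\sqrt{x}/h$. Second, the zero sum carries the same dilation factor $\Delta$ as the main term $\sqrt{x}\,\Delta\,\hat g(0)$, so it is about $g(0)\log\frac{1}{\Delta}\approx\frac12 g(0)\log x$. As written, your comparison would give $h\asymp\sqrt{x\log x}$. Similarly, the prime-power contribution must be $o(\log x)$, not merely $O(\log x)$, because the main term is itself of order $\log x$; it is in fact $O(x^{-1/2}\log^2 x)$.
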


Theorem~\ref{thm:cms} improves intervals established earlier by different
techniques, for instance, by Olivier Ramar\'e and Yannick Saouter
\cite{rs-primes-int}, Dudek \cite{dudek-riemann-gaps} as well as Dudek,
Lo{\"\i}c Greni\'e and Giuseppe Molteni \cite{dgm-primes-rh}. From
Theorem~\ref{thm:cms}, we can readily conclude the following weak version of
Legendre's conjecture. We note that, given $\alpha > 2$, the inequality
\eqref{eq:bound:n:simpl} holds for sufficiently large $x$ (because $\alpha / 2
- 1 > 0$). We are using a slightly smaller interval than $[x^{\alpha}, (x +
1)^{\alpha}]$ in the conclusions because that is convenient for the
application to prime-generating constants in Lemma~\ref{lem:prc:legendre}.

\begin{corollary}
  \label{cor:legendre:a:rh}Suppose the Riemann hypothesis is true, and let
  $\alpha > 2$ be fixed. If
  \begin{equation}
    x^{\alpha / 2 - 1} \geq \frac{22}{25} \log (x), \quad x \geq 1,
    \label{eq:bound:n:simpl}
  \end{equation}
  then there is a prime in the interval $[x^{\alpha}, (x + 1)^{\alpha} - 1)$.
\end{corollary}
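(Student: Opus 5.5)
We apply Theorem~\ref{thm:cms} with $y = x^{\alpha}$ in place of $x$. This produces a prime $p$ with
\begin{equation*}
  x^{\alpha} \leq p \leq x^{\alpha} + \tfrac{22}{25} x^{\alpha/2} \log (x^{\alpha}) = x^{\alpha} + \tfrac{22}{25}\,\alpha\, x^{\alpha/2} \log (x).
\end{equation*}
So it suffices to show that the right-hand side is strictly less than $(x+1)^{\alpha} - 1$. Theorem~\ref{thm:cms} requires $y \geq 4$, i.e.\ $x^{\alpha} \geq 4$. Small values of $x$ are handled by the hypothesis \eqref{eq:bound:n:simpl}, as explained at the end.

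The key comparison uses the mean value theorem, or equivalently convexity for $\alpha > 2$:
\begin{equation*}
  (x+1)^{\alpha} - x^{\alpha} \geq \alpha x^{\alpha - 1} + \tbinom{\alpha}{2} x^{\alpha-2},
\end{equation*}
or at least $(x+1)^{\alpha} - x^{\alpha} > \alpha x^{\alpha-1}$, with the margin needed to absorb the extra $-1$. Multiplying \eqref{eq:bound:n:simpl} by $\alpha x^{\alpha/2}$ gives
\begin{equation*}
  \alpha x^{\alpha - 1} \geq \tfrac{22}{25}\, \alpha\, x^{\alpha/2} \log (x).
\end{equation*}
Hence
\begin{equation*}
  p \leq x^{\alpha} + \alpha x^{\alpha-1} \leq (x+1)^{\alpha} - \bigl[(x+1)^{\alpha} - x^{\alpha} - \alpha x^{\alpha-1}\bigr].
\end{equation*}
The bracketed quantity is at least $\tfrac{\alpha(\alpha-1)}{2} x^{\alpha-2}$ by Taylor's theorem with the Lagrange remainder, since the second derivative $\alpha(\alpha-1)t^{\alpha-2}$ is increasing for $\alpha > 2$. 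For $x \geq 1$ this gives $\tfrac{\alpha(\alpha-1)}{2} x^{\alpha-2} \geq \tfrac{\alpha(\alpha-1)}{2} > 1$ because $\alpha > 2$. Therefore $p < (x+1)^{\alpha} - 1$, which places $p$ in the half-open interval $[x^{\alpha}, (x+1)^{\alpha} - 1)$.

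The main obstacle is the range where $x^{\alpha} < 4$, i.e.\ $1 \leq x < 4^{1/\alpha} < 2$, where Theorem~\ref{thm:cms} does not apply. Here the argument does not need it: for $x \geq 1$ and $\alpha > 2$ we have $(x+1)^{\alpha} - 1 \geq 2^{\alpha}(\ldots)$, and more concretely
\begin{equation*}
  (x+1)^{\alpha} - 1 - x^{\alpha} \geq 2^{\alpha} - 2 > 2
\end{equation*}
at $x = 1$, a quantity that increases with $x$. So the interval $[x^{\alpha}, (x+1)^{\alpha} - 1)$ has length greater than $2$ and starts below $4$. I would finish by checking directly that it contains one of the primes $2$, $3$ or $5$. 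For example, if $x^{\alpha} \leq 2$ then the interval contains $2$ as long as $(x+1)^{\alpha} - 1 > 2$, which holds since $(x+1)^{\alpha} > 4$. If $2 < x^{\alpha} \leq 3$, or $3 < x^{\alpha} < 4$, the length bound similarly captures $3$ or $5$ respectively. This small-case bookkeeping is routine but must be done to cover all $x \geq 1$.
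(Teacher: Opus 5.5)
Your proof is correct and follows the paper's argument: apply Theorem~\ref{thm:cms} at $y = x^{\alpha}$, reduce to \eqref{eq:bound:n:simpl} via $(x+1)^{\alpha} - 1 - x^{\alpha} > \alpha x^{\alpha-1}$, and handle small $x$ using that the interval has length exceeding $2$. You justify that last inequality with the Lagrange remainder, which the paper leaves unstated, and you cut off at $x^{\alpha} \geq 4$ using the primes $2, 3, 5$, where the paper cuts off at $x^{\alpha} > 7$ using the prime gaps below $7$; the small cases come from this direct check, not from the hypothesis \eqref{eq:bound:n:simpl} as your first paragraph suggests.
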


\begin{proof}
  The statement is clearly true for $x = 1$. In the sequel, we therefore
  assume $x > 1$. Observe that the interval $[x^{\alpha}, (x + 1)^{\alpha} -
  1)$ has length exceeding $\alpha x^{\alpha - 1} \geq 2 x > 2$. Hence,
  since the first prime gap larger than $2$ occurs between $7$ and $11$, we
  may further assume $x^{\alpha} > 7$. We set $y = x^{\alpha}$ with the
  intention of applying Theorem~\ref{thm:cms} (with $y$ in place of $x$; note
  that, by our assumptions, the condition $y \geq 4$ is satisfied) and
  observe that
  \begin{equation*}
    y + \frac{22}{25} \sqrt{y} \log (y) = x^{\alpha} + \frac{22}{25} \alpha
     x^{\alpha / 2} \log (x) < (x + 1)^{\alpha} - 1
  \end{equation*}
  if and only if
  \begin{equation}
    (x + 1)^{\alpha} - 1 - x^{\alpha} > \frac{22}{25} \alpha x^{\alpha / 2}
    \log (x) . \label{eq:bound:n}
  \end{equation}
  Since the left-hand side satisfies $(x + 1)^{\alpha} - 1 - x^{\alpha} >
  \alpha x^{\alpha - 1}$, it is sufficient (though not necessary) for
  \eqref{eq:bound:n} that
  \begin{equation*}
    x^{\alpha - 1} \geq \frac{22}{25} x^{\alpha / 2} \log (x),
  \end{equation*}
  which is equivalent to the inequality \eqref{eq:bound:n:simpl}. For those
  $x$ satisfying \eqref{eq:bound:n:simpl}, it therefore follows from
  Theorem~\ref{thm:cms} that the interval $[x^{\alpha}, (x + 1)^{\alpha} - 1)$
  is guaranteed to contain a prime, as claimed.
\end{proof}

The following result provides the explicit bound \eqref{eq:bound:n:expl} on
$x$ that implies the inequality~\eqref{eq:bound:n:simpl}.

\begin{lemma}
  \label{lem:bound:n:expl}Suppose that $2 < \alpha \leq 2 \left(1 +
  \frac{1}{e} \right) \approx 2.736$ and define $\beta = 2 / (\alpha - 2)$.
  Then the inequality~\eqref{eq:bound:n:simpl} holds for all
  \begin{equation}
    x \geq [\beta (\log (\beta) + \log (\log (\beta)))]^{\beta} .
    \label{eq:bound:n:expl}
  \end{equation}
\end{lemma}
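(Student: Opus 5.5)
The plan is to substitute so that the inequality becomes a standard ``$t \geq c\,\beta\log t$'' problem. Since $\alpha/2 - 1 = 1/\beta$, inequality~\eqref{eq:bound:n:simpl} reads $x^{1/\beta} \geq \frac{22}{25}\log(x)$. Putting $x = t^{\beta}$ with $t \geq 1$, this becomes
\begin{equation*}
  g(t) := t - \tfrac{22}{25}\,\beta \log(t) \geq 0 .
\end{equation*}
The bound~\eqref{eq:bound:n:expl} says exactly that $t \geq t_0 := \beta(\log\beta + \log\log\beta)$. So it suffices to prove two things: $g$ is increasing on $[t_0,\infty)$, and $g(t_0) \geq 0$.

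First I will use the hypothesis on $\alpha$. The condition $\alpha \leq 2(1+1/e)$ is equivalent to $\beta \geq e$. Hence $u := \log\beta \geq 1$ and $v := \log u \geq 0$, and therefore $t_0 = \beta(u+v) \geq \beta$. Since $g'(t) = 1 - \frac{22}{25}\beta/t > 0$ for $t > \frac{22}{25}\beta$, the function $g$ is increasing on $[t_0,\infty)$. Thus everything reduces to the single inequality $g(t_0) \geq 0$.

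Next I will write $\log t_0 = u + \log(u+v)$. The inequality $g(t_0) \geq 0$ then becomes
\begin{equation*}
  u + v \geq \tfrac{22}{25}\bigl(u + \log(u+v)\bigr).
\end{equation*}
Note that the factor $\frac{22}{25}$ is genuinely needed: without it, $\log(u+v) > v$ would make the inequality fail. To handle the logarithm, I will use $\log(u+v) = v + \log(1+v/u) \leq v + v/u$. This reduces the claim to
\begin{equation*}
  \tfrac{3}{25}\, u(u+v) \geq \tfrac{22}{25}\, v, \qquad\text{i.e.}\qquad 3u(u+\log u) \geq 22 \log u \quad (u \geq 1).
\end{equation*}

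The final elementary check is the step I expect to be the main obstacle, since it is fairly tight around $u \approx 2$. For large $u$ I will use $\log u \leq u/e$: then it suffices that $3u^2 \geq 22u/e$, i.e.\ $u \geq 22/(3e) \approx 2.70$. For $1 \leq u \leq 2.7$, I will verify $h(u) = 3u^2 + (3u - 22)\log u \geq 0$ directly. This holds at $u = 1$ (where $h = 3$), and near $u = 2$ we have $h(2) \approx 12 - 16\log 2 \approx 0.9 > 0$. I would confirm positivity on the whole subinterval either by a short derivative analysis or by splitting it into a few pieces and bounding $\log u$ by its tangent line on each. If this bound turns out too lossy, the fallback is to keep $\log(1+v/u)$ exact instead of using $\log(1+v/u) \leq v/u$.
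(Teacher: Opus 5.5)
Everything through the reduction is correct and is the paper's argument. Your monotonicity of $g$ on $[t_0,\infty)$ is the paper's observation that $x/\log(x)^{\beta}$ increases for $x\geq e^{\beta}$. Your condition $g(t_0)\geq 0$ is equivalent to the paper's final inequality $3(u+v)\geq 22\log(1+v/u)$, with $u=\log\beta\geq 1$ and $v=\log u$.

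The gap is the next step. The estimate $\log(1+v/u)\leq v/u$ loses too much, and the inequality it leaves, $h(u)=3u^2+(3u-22)\log u\geq 0$ for $1\leq u\leq 2.7$, is false. For example, $h(3/2)=\tfrac{27}{4}-\tfrac{35}{2}\log\tfrac{3}{2}\approx 6.75-7.10<0$. In fact $h<0$ for roughly $1.37<u<1.81$. That is $\beta$ between about $3.9$ and $6.1$, or $\alpha$ between about $2.33$ and $2.51$, which lies inside the range the lemma claims. Your checks at $u=1$ and $u=2$ straddle this dip, so no derivative analysis or subdivision can confirm positivity there. Your crude bound does work once $u\geq 1.82$, in particular for $\beta\geq 8$, which is all the proof of Theorem~\ref{thm:legendre:a:rh:intro} uses. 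It does not work for all $\beta\geq e$.

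So your fallback is not optional, and it lands exactly on the inequality the paper leaves as a calculus exercise. That inequality, $F(u)=3(u+\log u)-22\log\bigl(1+\tfrac{\log u}{u}\bigr)\geq 0$ for $u\geq 1$, is true but tight: its minimum is only about $0.44$, near $u\approx 1.46$.

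One way to close it, with $s=\log(u)/u\in[0,1/e]$:
\begin{enumerate}
\item For $u\geq 22/(3e)\approx 2.70$, your own argument works, because $22\log(1+s)\leq 22s\leq 22/e\leq 3u\leq 3(u+\log u)$.
\item On $1\leq u\leq 2.7$, replace $\log(1+s)\leq s$ by $\log(1+s)\leq s-\tfrac{s^2}{2}+\tfrac{s^3}{3}$. This holds for $s\geq 0$, since the derivative of the difference is $s^3/(1+s)\geq 0$. The resulting function of $u$ stays above about $0.42$ and can be verified by routine estimates.
\end{enumerate}
As written, though, your proof rests on a false inequality at its only nontrivial step.
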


\begin{proof}
  The inequality~\eqref{eq:bound:n:simpl} takes the form $x \geq (C \log
  (x))^{\beta}$ with $C = 22 / 25$. It follows directly from
  \begin{equation*}
    \frac{\md}{\md x}  \frac{x}{\log (x)^{\beta}} = \frac{\log (x) -
     \beta}{\log (x)^{\beta + 1}}
  \end{equation*}
  that $x / \log (x)^{\beta}$ is an increasing function for $x \geq
  e^{\beta}$. On the other hand, the bounds on $\alpha$ imply that $\beta
  \geq e$. In particular, $\log (\log (\beta)) \geq 0$ so that it
  follows from \eqref{eq:bound:n:expl} that $x \geq \beta^{\beta}
  \geq e^{\beta}$. It therefore suffices to show that the
  inequality~\eqref{eq:bound:n:simpl} holds for
  \begin{equation*}
    x = [\beta (\log (\beta) + \log (\log (\beta)))]^{\beta} .
  \end{equation*}
  With this value for $x$ fixed,
  \begin{eqnarray*}
    \log (x) & = & \beta \log (\beta (\log (\beta) + \log (\log (\beta))))\\
    & = & \beta \left(\log (\beta) + \log (\log (\beta)) + \log \left(1 +
    \frac{\log (\log (\beta))}{\log (\beta)} \right) \right) .
  \end{eqnarray*}
  It follows that
  \begin{equation*}
    x^{1 / \beta} - \frac{22}{25} \log (x) = \frac{\beta}{25} \left(3 (\log
     (\beta) + \log (\log (\beta))) - 22 \log \left(1 + \frac{\log (\log
     (\beta))}{\log (\beta)} \right) \right),
  \end{equation*}
  and it only remains to observe that the right-hand side is positive (at
  least for $\beta \geq e$). This is left as a calculus exercise (the
  coefficients $3$ and $22$ are not optimal).
\end{proof}

Sorenson and Webster \cite{sw-legendre} recently described an algorithm to
efficiently test a slightly stronger version of Legendre's conjecture due to
Ludvig Oppermann, namely that there are primes between $n^2$ and $n (n + 1)$
as well as between $n (n + 1)$ and $(n + 1)^2$ for all integers $n > 1$. Using
that algorithm they verify that this is true for all $n \leq N$ where $N
= 7.05 \cdot 10^{13}$. The following observation allows us to conclude from
this computation that the intervals $[x^{\alpha}, (x + 1)^{\alpha}]$ contain a
prime provided that $1 \leq x^{\alpha / 2} < N$. In particular, letting
$N \rightarrow \infty$, we see that Oppermann's conjecture implies that, for
every $\alpha \geq 2$ and $x \geq 1$, there is at least one prime in
the interval $[x^{\alpha}, (x + 1)^{\alpha}]$.

\begin{proposition}
  \label{prop:oppermann:legendre}Let $N$ be an integer and suppose that the
  intervals $[n^2, n (n + 1)]$ and $[n (n + 1), (n + 1)^2]$ each contain a
  prime for all $n \in \{ 1, 2, \ldots, N \}$. Then, for every $\alpha
  \geq 2$ and $x \geq 1$, the interval $[x^{\alpha}, (x +
  1)^{\alpha}]$ contains a prime provided that $x^{\alpha / 2} < N$.
\end{proposition}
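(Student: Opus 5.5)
Set $y = x^{\alpha/2} \geq 1$. The interval in question is $[y^2, (x+1)^\alpha]$, and the plan is to show it contains one of the Oppermann intervals $[n^2, n(n+1)]$ or $[n(n+1), (n+1)^2]$ with $1 \leq n \leq N$. The hypothesis then supplies a prime inside that sub-interval, and hence inside $[x^\alpha, (x+1)^\alpha]$.

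The key inequality is
\begin{equation*}
(x+1)^{\alpha} = \bigl((x+1)^{\alpha/2}\bigr)^2 \geq (x^{\alpha/2} + 1)^2 = (y+1)^2 .
\end{equation*}
This holds because $\alpha/2 \geq 1$, and for $t \geq 1$ the function $t \mapsto (x+1)^t - x^t$ is nondecreasing in $t$. One can check this by writing $(x+1)^t - x^t = t\int_x^{x+1} s^{t-1}\,\md s$, whose integrand is at least $1$; alternatively, use superadditivity of the convex function $u \mapsto u^{t}$ on $[0,\infty)$ with value $0$ at $0$. Consequently $[x^\alpha,(x+1)^\alpha] \supseteq [y^2, (y+1)^2]$, so it suffices to find a prime in $[y^2,(y+1)^2]$ for real $y \in [1, N)$.

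Now let $n = \lfloor y \rfloor$, so $1 \leq n \leq N-1 < N$ because $y<N$. Write $y = n + f$ with $0 \leq f < 1$. If $y^2 \leq n(n+1)$, then $[n(n+1), (n+1)^2] \subseteq [y^2, (y+1)^2]$, since $(n+1)^2 \leq (y+1)^2$; the hypothesis at index $n$ then gives a prime. Otherwise $y^2 > n(n+1)$, and I want to show $[(n+1)^2, (n+1)(n+2)] \subseteq [y^2,(y+1)^2]$. The left endpoint is fine because $y < n+1$. For the right endpoint, $(y+1)^2 > y^2 + 2y + 1 > n(n+1) + 2n + 1 = n^2+3n+1$, while $(n+1)(n+2) = n^2+3n+2$. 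This is off by one in the wrong direction, so I need to be more careful: use $y > \sqrt{n(n+1)}$ directly. Then $(y+1)^2 = y^2 + 2y + 1 > n^2 + n + 2\sqrt{n(n+1)} + 1$, and this exceeds $n^2+3n+2$ iff $2\sqrt{n(n+1)} > 2n+1$, i.e.\ $4n^2+4n > 4n^2+4n+1$, which is false.

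So that second subcase fails as stated, and this is the main obstacle. The fix is to apply the index-$n$ interval there instead, with only the weaker conclusion that it meets $[y^2,(y+1)^2]$: the prime in $[n(n+1),(n+1)^2]$ might lie below $y^2$. I would therefore take a cleaner route through the $(n+1)$-level intervals. In the case $y^2 > n(n+1)$, use the interval $[(n+1)^2, (n+1)(n+2)]$ at index $n+1$, which is contained in $[y^2,(y+1)^2]$ once $(y+1)^2 \geq (n+1)(n+2)$; failing that, note that $[y^2,(y+1)^2]$ contains $(n+1)^2$ in its interior. Since this fiddling risks going in circles, I would instead exploit the full slack of the key inequality rather than reducing to $[y^2,(y+1)^2]$. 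Concretely, I expect the intended argument to pick $n=\lfloor y\rfloor$ and split on whether $y^2 \le n(n+1)$. In the first case the index-$n$ upper interval lies inside. In the second case one uses integrality: the prime $p$ in $[(n+1)^2,(n+1)(n+2)]$ (requiring $n+1 \leq N$, which holds since $n<N$) satisfies $p \leq n^2+3n+2$, while $(y+1)^2 > n^2+3n+1$. If the inequality is strict and $p$ is an integer with $p \le n^2+3n+2$, this still leaves the edge case $p=n^2+3n+2$. That edge case is impossible, because $(n+1)(n+2)$ is even and larger than $2$, hence not prime. So $p \leq n^2+3n+1 < (y+1)^2 \le (x+1)^\alpha$, and $p \geq (n+1)^2 > y^2$, completing the argument.
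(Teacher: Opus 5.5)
Your final argument is correct and follows the paper's proof. You set $y=x^{\alpha/2}$, take $n=\lfloor y\rfloor$, split on whether $y^2\le n(n+1)$, and in the second case use $(y+1)^2>(n+1)(n+2)-1$ together with the fact that $(n+1)(n+2)$ is even and hence not prime. The paper leaves that parity step implicit, and likewise the inequality $(x+1)^{\alpha}\ge (y+1)^2$, which you justify.

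In a write-up, delete the abandoned detours in the middle: the index-$n$ ``fix'' and the interior-point remark do not by themselves give a prime in the interval. Also replace the ``$>$'' in $(y+1)^2>y^2+2y+1$ by an equality.
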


\begin{proof}
  Consider a specific interval $[x^{\alpha}, (x + 1)^{\alpha}]$ for $x
  \geq 1$ and $\alpha \geq 2$. If $y = x^{\alpha / 2}$ then $[y^2,
  (y + 1)^2]$ is contained in $[x^{\alpha}, (x + 1)^{\alpha}]$. It therefore
  suffices to show that every interval $[y^2, (y + 1)^2]$ contains a prime
  provided that $1 \leq y < N$.
  
  Set $m = \lfloor y \rfloor$ so that $m^2 \leq y^2 < (m + 1)^2 \leq
  (y + 1)^2$. First, let us assume that $m (m + 1) \geq y^2$. In that
  case, we have $[m (m + 1), (m + 1)^2] \subseteq [y^2, (y + 1)^2]$. Hence
  both intervals contain a prime provided that $m \leq N$. Second, we
  assume that $m (m + 1) < y^2$. In that case,
  \begin{equation*}
    (y + 1)^2 = y^2 + (2 y + 1) > m (m + 1) + (2 m + 1) = (m + 1) (m + 2) -
     1.
  \end{equation*}
  Consequently, we find that $[(m + 1)^2, (m + 1) (m + 2) - 1] \subset [y^2,
  (y + 1)^2]$. Hence both intervals contain a prime provided that $m + 1
  \leq N$.
\end{proof}

Equipped with Corollary~\ref{cor:legendre:a:rh} as well as
Lemma~\ref{lem:bound:n:expl} and Proposition~\ref{prop:oppermann:legendre}, we
complete the proof of Theorem~\ref{thm:legendre:a:rh:intro} stated in the
introduction. Since it is useful for the application in
Section~\ref{sec:mills}, we will actually prove
Theorem~\ref{thm:legendre:a:rh:intro} with the intervals $[x^{2 + \delta}, (x
+ 1)^{2 + \delta}]$ replaced with the slightly smaller intervals $[x^{2 +
\delta}, (x + 1)^{2 + \delta} - 1)$.

\begin{proof}[Proof of Theorem~\ref{thm:legendre:a:rh:intro}]
  If $\delta < 1 / 4$, then the result follows from combining
  Corollary~\ref{cor:legendre:a:rh} with Lemma~\ref{lem:bound:n:expl}. In the
  remainder, we therefore assume $\delta \geq 1 / 4$. Note that if the
  interval $[x^{\alpha}, (x + 1)^{\alpha} - 1)$ does not contain a prime for
  some $x \geq 1$, then, for $\beta < \alpha$, the interval $[y^{\beta},
  (y + 1)^{\beta} - 1)$ with $y = x^{\alpha / \beta} \geq 1$ also does
  not contain a prime. It therefore suffices to consider the case $\delta = 1
  / 4$.
  
  Let $\delta = 1 / 4$ and, hence, $\alpha = 2 + \delta = 9 / 4$. From
  Lemma~\ref{lem:bound:n:expl} with $\beta = 2 / \delta = 8$ applied to
  Corollary~\ref{cor:legendre:a:rh}, we find that there is a prime in
  $[x^{\alpha}, (x + 1)^{\alpha} - 1)$ provided that $x \geq [\beta (\log
  (\beta) + \log (\log (\beta)))]^{\beta} \approx 6.55 \cdot 10^{10}$. On the
  other hand, the computations by Sorenson and Webster \cite{sw-legendre}
  allow us to apply Proposition~\ref{prop:oppermann:legendre} with $N = 7.05
  \cdot 10^{13}$ to conclude that all intervals $[y^2, (y + 1)^2]$ contain a
  prime provided that $1 \leq y < N$. If $y = x^{\alpha / 2}$ then $[y^2,
  (y + 1)^2]$ is contained in $[x^{\alpha}, (x + 1)^{\alpha} - 1)$ provided
  that $x$ is large enough. For $\alpha = 9 / 4$ it is sufficient that $x
  \geq 2$. Since smaller values of $x$ are readily verified directly, it
  follows that the intervals $[x^{\alpha}, (x + 1)^{\alpha} - 1)$ contain a
  prime provided that $x^{\alpha / 2} < N$. In the present case, this bound on
  $x$ takes the form $x < N^{2 / \alpha} \approx 2.04 \cdot 10^{12}$. This
  exceeds $6.55 \cdot 10^{10}$ and thus completes the proof.
\end{proof}

Clearly, this final argument is not sharp. Indeed, the same argument goes
through provided that $[\beta (\log (\beta) + \log (\log (\beta)))]^{\beta} <
N^{2 / \alpha}$ or, equivalently,
\begin{equation*}
  [\beta (\log (\beta) + \log (\log (\beta)))]^{\beta + 1} < N.
\end{equation*}
Using the value $N = 7.05 \cdot 10^{13}$ from \cite{sw-legendre}, this
allows us to lower the bound slightly for $\delta$ in
Theorem~\ref{thm:legendre:a:rh:intro} from $1 / 4$ to $0.2275$. By replacing
the bound~\eqref{eq:bound:n:expl} with the
inequality~\eqref{eq:bound:n:simpl}, we can further reduce $\delta$ to
$0.2253$.

\section{Primes between consecutive smaller
powers.}\label{sec:legendre:smaller}

Inspired by Legendre's conjecture, one may wonder whether there are primes
between consecutive smaller powers. More precisely, given $\alpha > 1$, what
can we say about primes between $x^{\alpha}$ and $(x + 1)^{\alpha}$? Here, we
take $\alpha > 1$ since if $\alpha = 1$ then it is trivially the case that
there are arbitrarily large $x$ such that the intervals $[x^{\alpha}, (x +
1)^{\alpha}]$ contain no prime.

\begin{example}
  A quick numerical search suggests that, for integers $n$, there is always a
  prime between $n^{3 / 2}$ and $(n + 1)^{3 / 2}$ provided that $n > 1051$.
  Indeed, it appears that the interval $[n^{3 / 2}, (n + 1)^{3 / 2}]$ does not
  contain a prime precisely if $n$ is one of 10, 20, 24, 27, 32, 65, 121, 139,
  141, 187, 306, 321, 348, 1006, 1051. This is sequence A144140 on the OEIS
  \cite{sloane-oeis} where we find it conjectured that our computed list of
  exceptional values is complete.
\end{example}

In general, the following is expected to be true.

\begin{conjecture}
  \label{conj:legendre:a:small}Let $\alpha > 1$ be fixed. Then there is a
  prime between $x^{\alpha}$ and $(x + 1)^{\alpha}$ for all sufficiently large
  $x$.
\end{conjecture}

This conjecture is another way to state that it is expected that $p_{n + 1} -
p_n = O (p_n^{\delta})$ for any $\delta > 0$. While this is open, even
assuming the Riemann hypothesis, stronger conjectures have been put forth.
Specifically, Cram\'er \cite{cramer-primes-diff} conjectured in 1936 that
gaps between consecutive primes are remarkably small in the following sense.

\begin{conjecture}[Cram\'er \cite{cramer-primes-diff}]
  \label{conj:cramer}As $n \rightarrow \infty$, we have $p_{n + 1} - p_n = O
  (\log^2 (p_n))$.
\end{conjecture}

Due to the extensive computations by Tom{\'a}s Oliveira~e Silva, Siegfried
Herzog and SilvioPardi \cite[Section~2.2.1]{ohp-primegaps} in the context of
verifying the Goldbach conjecture, we know that $p_{n + 1} - p_n < \log^2
(p_n)$ for all primes $11 \leq p_n \leq 4 \cdot 10^{18}$. On the
other hand, Legendre's conjecture implies that $p_{n + 1} - p_n = O \left(\sqrt{p_n} \right)$ while, assuming the Riemann hypothesis, Cram\'er
\cite{cramer-primes-diff} proved that $p_{n + 1} - p_n = O \left(\sqrt{p_n}
\log (p_n) \right)$.

\section{Prime-generating constants.}\label{sec:mills}

The constructions of Mills-type prime-generating constants in
\cite{mills-primegen}, \cite{cc-mills} and \cite{elsholtz-mills},
mentioned in the introduction, are based on the following lemma which connects
these with Legendre-type results about the existence of primes between
consecutive powers.

\begin{lemma}
  \label{lem:prc:legendre}Let $\alpha > 1$. Suppose that the intervals
  $[n^{\alpha}, (n + 1)^{\alpha} - 1)$ contain a prime for all $n \geq
  n_0$. Then there exists a real number $A$ such that $\lfloor A^{\alpha^n}
  \rfloor$ is a prime for every positive integer $n$.
\end{lemma}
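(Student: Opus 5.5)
We follow Mills' classical construction and build a sequence of primes $q_1, q_2, \ldots$ recursively. The goal is for the associated nested intervals of real numbers to shrink to a single point $A$.

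\emph{Start of the recursion.} Choose a prime $q_1 \geq n_0$; this is possible since there are infinitely many primes.

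\emph{Recursive step.} Given the prime $q_k \geq n_0$, the hypothesis applied with $n = q_k$ yields a prime $q_{k+1}$ with
\begin{equation*}
  q_k^{\alpha} \leq q_{k+1} < (q_k + 1)^{\alpha} - 1 .
\end{equation*}
Since $\alpha > 1$, we have $q_{k+1} \geq q_k^{\alpha} \geq q_k \geq n_0$, so the recursion can continue indefinitely.

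\emph{Nested intervals.} Define
\begin{equation*}
  u_k = q_k^{\alpha^{-k}}, \qquad v_k = (q_k + 1)^{\alpha^{-k}} .
\end{equation*}
Raising the displayed inequality to the power $\alpha^{-(k+1)}$ gives $u_k \leq u_{k+1}$. Similarly, from $q_{k+1} + 1 < (q_k+1)^{\alpha}$ we get $v_{k+1} < v_k$. Since $u_k < v_k$, the sequence $(u_k)$ is nondecreasing and bounded above by $v_1$, and $(v_k)$ is strictly decreasing. Let $A = \lim_{k\to\infty} u_k$. Then for every $k$,
\begin{equation*}
  u_k \leq A \leq v_{k+1} < v_k .
\end{equation*}
Raising to the power $\alpha^k$ yields $q_k \leq A^{\alpha^k} < q_k + 1$, and therefore $\lfloor A^{\alpha^k} \rfloor = q_k$ is prime for every $k \geq 1$.

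\emph{Role of the strict ``$-1$''.} This is the one delicate point. The strict upper bound $q_{k+1} < (q_k+1)^\alpha - 1$ is exactly what gives the strict inequality $v_{k+1} < v_k$. That strictness is what guarantees $A^{\alpha^k} < q_k+1$ rather than merely $A^{\alpha^k} \leq q_k + 1$, so $A^{\alpha^k}$ can never land on the integer $q_k+1$. With the interval $[n^\alpha, (n+1)^\alpha]$, the limit could equal $v_k$, and then $\lfloor A^{\alpha^k} \rfloor = q_k+1$, which need not be prime. The shortened intervals avoid this, which is why the paper works with them.
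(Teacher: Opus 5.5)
Your proof is correct and follows the paper's argument essentially step for step: the same recursive choice $q_{k+1} \in [q_k^{\alpha}, (q_k+1)^{\alpha} - 1)$, and the same squeeze of $A$ between the nondecreasing sequence $q_k^{\alpha^{-k}}$ and the strictly decreasing sequence $(q_k+1)^{\alpha^{-k}}$. Your explicit chain $u_k \leq A \leq v_{k+1} < v_k$ makes precise where the strict bound $A^{\alpha^k} < q_k + 1$ comes from, a step the paper leaves implicit.
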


The general construction underlying this lemma is essentially given by Ivan
Niven \cite{niven-primegen}. Since it is rather simple and instrumental for
the subsequent examples, we include a proof below. We first observe that the
intervals $[n^{\alpha}, (n + 1)^{\alpha} - 1)$ in the lemma are equivalent to
the intervals $[n^{\alpha}, (n + 1)^{\alpha}]$ if $\alpha \geq 2$ is an
integer (since in that case neither $(n + 1)^{\alpha}$ nor $(n + 1)^{\alpha} -
1$ are prime). On the other hand, we note that, with the present application
in mind, our proof of Theorem~\ref{thm:legendre:a:rh:intro} actually shows
that, assuming the RH, the slightly smaller intervals $[x^{\alpha}, (x +
1)^{\alpha} - 1)$ contain a prime for all $x \geq 1$ provided that
$\alpha \geq 2.25$. For $\alpha > 2$ these intervals contain a prime for
all sufficiently large $x$, and this conclusion continues to hold for all
$\alpha > 1$ if Conjecture~\ref{conj:legendre:a:small} is true.

\begin{proof}[Proof of Lemma~\ref{lem:prc:legendre}]
  Pick $q_1$ to be any prime larger than $n_0$. By assumption, we can then
  select primes $q_2, q_3, \ldots$ in such a way that $q_{n + 1} \in
  [q_n^{\alpha}, (q_n + 1)^{\alpha} - 1)$. Note that we have
  \begin{equation*}
    q_n^{\alpha} \leq q_{n + 1} < q_{n + 1} + 1 < (q_n + 1)^{\alpha}
  \end{equation*}
  and, hence,
  \begin{equation}
    q_n^{\alpha^{- n}} \leq q_{n + 1}^{\alpha^{- (n + 1)}} < (q_{n + 1} +
    1)^{\alpha^{- (n + 1)}} < (q_n + 1)^{\alpha^{- n}} . \label{eq:mills:qn:a}
  \end{equation}
  It follows that the sequences $q_n^{\alpha^{- n}}$ and $(q_n + 1)^{\alpha^{-
  n}}$ are weakly increasing and decreasing, respectively. Being bounded by
  the terms of the latter, the sequence $q_n^{\alpha^{- n}}$ has a finite
  limit $A$ as $n \rightarrow \infty$. It follows from \eqref{eq:mills:qn:a}
  that
  \begin{equation*}
    q_n \leq A^{\alpha^n} < q_n + 1.
  \end{equation*}
  This is equivalent to $\lfloor A^{\alpha^n} \rfloor = q_n$.
\end{proof}

Note that the inequalities \eqref{eq:mills:qn:a} imply that
\begin{equation}
  q_n^{\alpha^{- n}} \leq A < (q_n + 1)^{\alpha^{- n}}
  \label{eq:mills:ct:est}
\end{equation}
and that this allows us to estimate the constant $A$ produced by the proof of
Lemma~\ref{lem:prc:legendre} to high precision.

\begin{example}
  \label{eg:mills:3}Assuming the RH, the case $\delta = 1$ of
  Theorem~\ref{thm:legendre:a:rh:intro} implies that the intervals $[n^3, (n +
  1)^3]$ and, hence, $[n^3, (n + 1)^3 - 1)$ contain a prime for all integers
  $n \geq 1$. In the proof of Lemma~\ref{lem:prc:legendre} we can
  therefore choose $q_1 = 2$ and then inductively select $q_{n + 1}$ to be the
  least prime exceeding $q_n^3$. Accordingly,
  \begin{equation*}
    q_2 = 11, q_3 = 1361, q_4 = 2521008887, \ldots
  \end{equation*}
  and we can estimate the resulting constant $A$ using
  \eqref{eq:mills:ct:est}. For $n = 4$, the two sides of
  \eqref{eq:mills:ct:est} differ by less than $6.4 \cdot 10^{- 12}$, thus
  producing the value of $A = 1.30637788 \ldots$ in \eqref{eq:mills:ct} with
  $11$ correct decimal digits. Using $n = 10$, Caldwell and Cheng
  \cite{cc-mills} computed $A$ to more than $6850$ digits. Since we made the
  minimal possible choices for each of the primes $q_n$, it is not hard to see
  that this value $A$ is the smallest real number such that $\lfloor A^{3^n}
  \rfloor$ is a prime for all $n$.
  
  We can proceed likewise for any exponent $\alpha = 2 + \delta$ where $\delta
  \geq 1 / 4$. Indeed, as indicated above, \ our proof of
  Theorem~\ref{thm:legendre:a:rh:intro} actually shows that, assuming the RH,
  the intervals $[x^{\alpha}, (x + 1)^{\alpha} - 1)$ contain a prime for all
  $x \geq 1$. In the proof of Lemma~\ref{lem:prc:legendre} we can
  therefore again choose $q_1 = 2$ and then inductively select $q_{n + 1}$ to
  be the least prime exceeding $q_n^{\alpha}$. For instance, for $\alpha =
  2.25$ this results in
  \begin{equation*}
    q_2 = 5, q_3 = 41, q_4 = 4259, q_5 = 146535287, q_6 =
     2362490078520203903, \ldots
  \end{equation*}
  as well as the corresponding value
  \begin{equation*}
    A = 1.385510850556131889057735009841 \ldots
  \end{equation*}
  for the smallest real number, assuming the RH, such that $\lfloor A^{2.25^n}
  \rfloor$ is a prime for all $n$.
\end{example}

By Lemma~\ref{lem:prc:legendre}, Legendre's conjecture implies that there is a
real number $A$ such that $\lfloor A^{2^n} \rfloor$ is a prime for all
positive integers $n$. Remarkably, Kaisa Matom{\"a}ki \cite{matomaki-prf}
was able to prove the existence of such a number $A$ without assuming
Legendre's conjecture. Her construction is different and more intricate than
the one in Lemma~\ref{lem:prc:legendre}, and does not provide a specific value
for $A$.

\begin{example}
  Assuming the Legendre conjecture, we can proceed as in
  Example~\ref{eg:mills:3} for $\alpha = 2$ in place of $3$ to construct and
  approximate the smallest $A$ such that $\lfloor A^{2^n} \rfloor$ is a prime
  for all $n$. That is, we let $q_1 = 2$ and then inductively define $q_{n +
  1}$ to be the least prime exceeding $q_n^2$. This results in
  \begin{equation*}
    q_2 = 5, q_3 = 29, q_4 = 853, q_5 = 727613, q_6 = 529420677791, \ldots
  \end{equation*}
  as well as the corresponding value
  \begin{equation*}
    A = 1.524699960538094359923363575688 \ldots .
  \end{equation*}
  By construction, this $A$ has the property that $\lfloor A^{2^n} \rfloor$ is
  a prime for all $n$ if we can prove the following.
\end{example}

\begin{conjecture}
  \label{conj:legendre:mills}Define the sequence $q_1, q_2, q_3, \ldots$ by
  $q_1 = 2$ and by letting $q_{n + 1}$ be the least prime exceeding $q_n^2$.
  Then $q_{n + 1} < (q_n + 1)^2$ for all $n$.
\end{conjecture}

Although Conjecture~\ref{conj:legendre:mills} (or a variation with the initial
prime $q_1 = 2$ replaced by another fixed prime) is weaker than Legendre's
conjecture, it is likely of a similar difficulty and thus still out of reach.

\section{Relaxing Legendre's conjecture in other directions.}

With a particular interest in the case $\alpha = 2$, we have discussed the
question of whether there is always a prime between $x^{\alpha}$ and $(x +
1)^{\alpha}$, possibly assuming that $x$ is large enough. For $\alpha > 2$,
there are weaker versions of Legendre's conjecture and
Theorem~\ref{thm:legendre:a:rh:intro} indicates what is currently known,
assuming the Riemann hypothesis.

Legendre's conjecture can be fruitfully weakened in other directions as well.
For instance, Danilo Bazzanella \cite{bazzanella-primes-sq} has proven that
Legendre's conjecture holds for most intervals $[n^2, (n + 1)^2]$ in the sense
that, for every $\varepsilon > 0$, each interval $[n^2, (n + 1)^2] \subseteq
[1, N]$, with at most $O (N^{1 / 4 + \varepsilon})$ exceptions, contains a
prime (and, in fact, the expected number of primes). Further stronger bounds
on the number of possible exceptions are provided in
\cite{bazzanella-primes-sq} and \cite{bazzanella-primes-sq2} under the
Riemann hypothesis as well as another weaker heuristic hypothesis. In a
similar direction, it is known that even smaller intervals ``almost always''
contain a prime. For instance, a result of Chaohua Jia \cite{jia-shortint}
implies that the intervals $[n, n + n^{\theta}]$ with $\theta = \frac{1}{20} +
\varepsilon$ contain a prime number for all $n \in [N, 2 N]$ with at most $o
(N)$ exceptions. By a recent result of Larry Guth and James Maynard
\cite{gm-lve} (Corollary~1.4) such intervals with $\theta = \frac{2}{15} +
\varepsilon$ almost always contain the expected number of primes. We refer the
interested reader to these papers for more details and history.

Approximating the Legendre conjecture in a different direction, Jing-Run Chen
\cite{chen-almostprimes} showed that there are integers with at most two
prime factors between sufficiently large consecutive squares. Recently, Dudek
and Johnston \cite{dj-almostprimes} showed that the restriction to
sufficiently large squares can be removed if one allows integers with at most
four prime factors.

{\textbf{Acknowledgements. }}
We thank the referees and editors for many careful and
helpful suggestions that improved this paper.

\end{document}